\newtheorem{theorem}{Theorem}[section]
\newtheorem{proposition}[theorem]{Proposition}
\newtheorem{lemma}[theorem]{Lemma}
\newcommand{\ad}{\text{ad}}
\newcommand{\tr}{\text{tr}}
\newcommand{\id}{\text{Id}}
\newcommand{\BR}{{\bf R}}
\newcommand{\lieg}{{\mathfrak{g}}}
\newcommand{\liep}{{\mathfrak{p}}}
\newenvironment{Pf}{\medskip \noindent {\bf Proof: }}
   {$\diamondsuit$ }
\let\Cal\mathcal 
\let\Bbb\mathbb
\let\frak\mathfrak
\let\phi\varphi
\newcommand{\al}{\alpha}
\newcommand{\ep}{\epsilon}
\newcommand{\om}{\omega}
\newcommand{\ph}{\varphi}
\newcommand{\La}{\Lambda}
\newcommand{\Om}{\Omega}
\newcommand{\x}{\times}
\begin{document}
\pagenumbering{arabic}
\title[Essential Killing fields]{Essential Killing fields\\ 
of parabolic geometries:\\
  projective and conformal structures} \author{Andreas \v
  Cap}\address{Faculty of Mathematics\\University of
  Vienna\\Nordbergstr. 15\\1090 Vienna,
  AUSTRIA}\email{Andreas.Cap@univie.ac.at} 
\author{Karin Melnick}\address{Department of Mathematics\\
University of Maryland\\College Park, MD 20742, USA}
\email{karin@math.umd.edu}

\begin{abstract}
  We use the general theory developed in our article \cite{main} in
  the setting of parabolic geometries to reprove known results on
  special infinitesimal automorphisms of projective and conformal
  geometries. 
\end{abstract}

\subjclass{53A20, 53A30, 53B15, 58J70}
\keywords{essential infinitesimal automorphism, projective structure,
conformal structure, higher order fixed point}

\maketitle

\section{Introduction}

This text is a complement to our article \cite{main} which studies
special infinitesimal automorphisms in the general setting of parabolic
geometries. We illustrate the general theory developed there by
reproving the known results on such automorphisms in the cases of
projective structures (see \cite{Nagano-Ochiai}) and of conformal
structures (see \cite{Frances-Melnick} and \cite{Frances}). The main
moral of \cite{main} is that special infinitesimal automorphisms can be
understood to a large extent by doing purely algebraic computations on
the level of the Lie algebra which governs the geometry in
question. The output of these computations can then be nicely
interpreted geometrically, providing examples of the powerful
interplay between algebra and geometry for the class of parabolic
geometries. Even for well known geometries as the two examples
discussed here, this leads to precise new descriptions of the behavior of these special flows.

The two examples of structures discussed in this article belong to the
subclass of parabolic geometries related to so--called
$|1|$--gradings. These geometries have been studied under the names
AHS--structures, irreducible parabolic geometries, and abelian
parabolic geometries, in the literature. We will restrict all
discussions in this article to geometries in this subclass, referring
to \cite{main} for more general concepts. 

The notion of higher order fixed point, which is the
main concept studied in \cite{main}, becomes very simple in the
AHS--case: A point $x_0$ is a \emph{higher order fixed point} of an
infinitesimal automorphism $\eta$ of an AHS--structure if and only if
the local flow $\ph^t$ of $\eta$ fixes $x_0$ to first order, meaning $\ph^t(x_0)=x_0$ and $D\ph^t(x_0)=\id$ for all $t$ (see \cite[Def. 1.5]{main} for the general definition). Infinitesimal
automorphisms with this property exist on the homogeneous model of
each AHS--structure (see \ref{2.2} below) and, as observed in
\cite[Rmk. 1.6]{main}, they are always \textit{essential}. For the structures
treated in this article, the latter condition simply means not
preserving any affine connection in the projective class, respectively,
any metric in the conformal class.

The basic question is to what extent infinitesimal automorphisms
admitting a higher order fixed point can exist on non--flat
geometries. The simplest possible answer to this question would be that
existence of a higher order fixed point implies local flatness on an
open neighborhood of this fixed point, which happens for projective
structures. In this case, the infinitesimal automorphism in question
is conjugate via a local isomorphism to an infinitesimal automorphism
on the homogeneous model of the geometry which has a higher order
fixed point, and the latter can be explicitly described.

In general, and already in the case of conformal pseudo-Riemannian
structures, the situation is less simple. The next best case is that
existence of a higher order fixed point $x_0$ implies local flatness
on an open subset $U$ such that $x_0 \in \overline{U}$. For example,
for higher order fixed points of conformal flows of timelike or
spacelike type (see \ref{sect:conformal} below), the set $U$ is the
interior of the light cone, or the interior of its complement,
respectively, intersected with a neighborhood of $x_0$.  It seems
difficult to precisely describe the flow outside of $U$, but one can
obtain detailed information about the flow on $\overline{U}$ from the
results of \cite{main} (or from the results of \cite{Nagano-Ochiai}
and \cite{Frances-Melnick} in the projective and conformal cases,
respectively).

% While
% one can compare directly to the homogeneous model on $U$, there is no
% straightforward relation to the homogeneous model locally around
% $x_0$. Hence in this case it is highly desirable to get a more
% detailed description of the infinitesimal automorphism respectively of
% its flow and such a description is provided by the results in
% \cite{main}.

\section{Background}
In this section, we very briefly review some background on
AHS--structures referring to sections 3.1, 3.2, and 4.1 of
\cite{book}, and we collect the results from \cite{main} we will need
in the sequel.

\subsection{AHS--structures}\label{2.1} 
The basic data needed to specify an AHS--structure is a semi\-simple
Lie algebra $\lieg$ endowed with a $|1|$--grading, a
decomposition $\lieg=\lieg_{-1}\oplus\lieg_0\oplus\lieg_1$ making
$\lieg$ into a graded Lie algebra. This means that $\lieg_{\pm 1}$ are
abelian Lie subalgebras, while $\lieg_0$ is a Lie subalgebra which
acts on $\lieg_{\pm 1}$ via the restriction of the adjoint action. The
only additional information encoded in $\lieg$ is the restriction of
the Lie bracket of $\lieg$ to a map $[\ ,\
]:\lieg_1\otimes\lieg_{-1}\to\lieg_0$. It turns out that $\lieg_0$ is
always reductive, and its action on $\lieg_{-1}$ defines a faithful
representation. Finally $\frak p:=\lieg_0\oplus\lieg_1$ is a maximal
parabolic subalgebra of $\lieg$ and $\lieg_1$ is an ideal in $\frak
p$. The Killing form of $\lieg$ induces an isomorphism
$(\lieg/\liep)^*\cong\lieg_1$ of $\frak p$--modules, which can also be
interpreted as an isomorphism $(\lieg_{-1})^*\cong\lieg_1$ of
$\lieg_0$--modules.

Choosing a Lie group $G$ with Lie algebra $\lieg$, it turns out that
the normalizer $N_G(\frak p)$ is a closed subgroup of $G$ with Lie
algebra $\frak p$. One next chooses a \textit{parabolic subgroup}
$P\subset G$ corresponding to $\frak p$, a subgroup lying between
$N_G(\frak p)$ and its connected component of the identity. Then one
defines $G_0\subset P$ to be the closed subgroup consisting of all
elements whose adjoint action preserves the grading of $\lieg$. This
subgroup has Lie algebra $\lieg_0$ and it naturally acts on $\lieg_{-1}$ via
the adjoint action. Finally, the exponential mapping restricts to a
diffeomorphism from $\lieg_1$ onto a closed normal subgroup
$P_+\subset P$ and $P$ is the semidirect product $G_0\ltimes P_+$.

An AHS--structure of type $(\lieg,P)$ on a smooth manifold $M$ which
has the same dimension as $G/P$ is then defined as a \emph{Cartan
  geometry} of that type, which consists of a principal $P$--bundle
$p:B\to M$ and a \textit{Cartan connection} $\om\in\Om^1(B,\lieg)$.
The Cartan connection trivializes the tangent bundle $TB$, is
equivariant with respect to the principal right action of $P$, and
reproduces the generators of fundamental vector fields (see
\cite[Sec. 1.5]{book} for the definition). The \textit{homogeneous
  model} of the geometry is the bundle $G\to G/P$ with the
left-invariant Maurer--Cartan form as the Cartan connection.

Via the Cartan connection, $TM$ can be identified with
the associated bundle $B\x_P(\lieg/\liep)$. More precisely, one can
form $B_0:=B/P_+$ which is a principal bundle over $M$ with structure
group $P/P_+\cong G_0$, and $\om$ descends to a soldering form on this
bundle. Hence one can interpret $B_0\to M$ as a first order structure
with structure group $G_0$. In this interpretation, $TM\cong
B_0\x_{G_0}\lieg_{-1}$, and consequently $T^*M\cong
B_0\x_{G_0}\lieg_1\cong B\x_P\lieg_1$. 

If none of the simple ideals of $\frak g$ is of the type corresponding
to projective structures, requiring a normalization condition on the
curvature of $\om$ makes the Cartan geometry $(B,\om)$ equivalent, in
the categorical sense, to the underlying first order structure
$B_0$. In the projective case, this underlying structure contains no
information and the Cartan geometry is equivalent to the choice of a
projective class of torsion free linear connections on the tangent
bundle $TM$.

For any Cartan geometry, the curvature of a Cartan connection is a
complete obstruction to local isomorphism to the homogeneous model, or
\emph{local flatness} (see \cite[Prop. 1.5.2]{book}). In the
case of AHS--structures, there is a conceptual way to extract parts of
this curvature, called \emph{harmonic curvature} components, which
still form a complete obstruction to local flatness (see \cite[Thm.
3.1.12]{book}). In the examples we are going to discuss, there is only
one harmonic curvature component, and this is either a version of Weyl
curvature or of a Cotton--York tensor.

\subsection{Higher order fixed
  points}\label{2.2}
 
An infinitesimal automorphism of a Cartan geometry $(B\to M,\om)$ of
type $(\lieg,P)$ is a vector field $\tilde\eta\in\frak X(B)$ which is
$P$--invariant and satisfies $\Cal L_{\tilde\eta}\om=0$, where $\Cal
L$ denotes the Lie derivative. Any $P$--invariant vector field
$\tilde\eta$ on $B$ is projectable to a vector field $\eta\in\frak
X(M)$. From the equivalence to underlying structures discussed in
the previous section, one concludes that, apart from the projective case,
$\tilde\eta$ is an infinitesimal automorphism of the Cartan geometry
if and only if $\eta$ is an infinitesimal automorphism of the
underlying first order structure. There is an analogous correspondence
in the projective case.

Via the Cartan connection $\om$, a $P$--invariant vector field
$\tilde\eta$ on $B$ corresponds to a $P$--equivariant function
$f:B\to\lieg$. Then $\tilde\eta$, or the underlying vector
field $\eta$ on $M$, has a higher order fixed point at
$x_0\in M$ if and only if for one---or equivalently, any---point $b_0\in
B$ with $p(b_0)=x_0$, we have $f(b_0)\in\lieg_1\subset\lieg$. (Observe that $x_0$ is fixed by the flow, or is a zero of $\eta$, if and only if $f(b_0)\in\liep\subset\lieg$.)

If $\tilde\eta$ has a higher order fixed point at $x_0$, then for each
$b_0\in B$ with $p(b_0)=x_0$ the value $f(b_0)\in\lieg_1$ via $b_0$
corresponds to an element of $T^*_{x_0}M$ since $T^*M\cong
B\x_P\lieg_1$. Equivariance of $f$ implies that this element does not
depend on $b_0$, so $\tilde\eta$ gives rise to a well defined element
$\al\in T^*_{x_0}M$, called the \textit{isotropy} of $\tilde\eta$ (or
of $\eta$) at the higher order fixed point $x_0$ \cite[Def. 1.5]{main}.

The $P$--orbit $f(p^{-1}(x_0))\subset\lieg_1$
similarly depends only on $\al$. This $P$--orbit is called the \textit{geometric
  type} of the isotropy $\al$ \cite[Sec. 1.2]{main}, and is the most basic invariant of a
higher order fixed point. In all examples studied here and in
\cite{main}, different geometric types are discussed separately.

It is possible to do precise calculations for infinitesimal
automorphisms with higher order fixed points on the homogeneous model
$G/P$ of the geometry. Here the automorphisms are exactly the left
translations by elements of $G$, so infinitesimal automorphisms are
right invariant vector fields on $G$. By homogeneity, we may assume
that our automorphism fixes the point $o=eP\in G/P$. Thus the value of
an infinitesimal automorphism at $o$ can be naturally interpreted as
the generator $Z\in \lieg$ of the right invariant vector field in
question. Now of course $o$ is a fixed point if and only if $Z\in\frak
p$ and a higher order fixed point if and only if $Z\in\lieg_1$. Hence
the flows of such infinitesimal automorphisms are just the left
translations by $e^{tZ}$ for $Z\in\lieg_1$.

Given a higher order fixed point $x_0$ of an infinitesimal
automorphism $\eta$ and a neighborhood $U$ of $x_0$ in $M$, we call
the \textit{strongly fixed component} of $x_0$ in $U$ \cite[Def. 2.3]{main} the set of all
points in $U$ which can be reached from $x_0$ by a smooth curve lying
in $U$, all of whose points are higher order fixed points of the same
geometric type as $x_0$. The higher order fixed point $x_0$ is called
\textit{smoothly isolated} if its strongly fixed component consists of
$\{x_0\}$ only.

\subsection{Results from \cite{main}}\label{2.3} 
The first step to apply the results of \cite{main} is to associate to
an element $\al\in T^*_{x_0}M$ three geometrically significant subsets of the tangent space
$T_{x_0}M$. These sets are defined algebraically, after choosing an element
$b_0\in B$ with $p(b_0)=x_0$. Via $b_0$, the covector $\al$
corresponds to an element $Z\in\lieg_1$, and the subsets of
$\lieg_{-1}$ are defined by
\begin{gather*}
  C(Z):=\{X:[X,Z]=0\}\subset\{X:[X,[X,Z]]=0\}=:F(Z)\\
  T(Z):=\{X:[[Z,X],X]=-2X,[[Z,X],Z]=2Z\}
\end{gather*}
In \cite{main} the sets were denoted by $C_{\lieg_-}(Z)$, and similarly
for $F$ and $T$. Moreover, the original definition of $F_{\lieg_-}(Z)$
used there is different, but equivalence to the one used here is
observed in the proof of Proposition 2.16 of \cite{main}. 

For $X\in T(Z)$, we can consider $A:=[Z,X]\in\lieg_0$. Given a
representation $\Bbb W$ of $G_0$ on which $A$ acts diagonalizably, we
define $\Bbb W_{ss}(A)\subset\Bbb W_{st}(A)\subset\Bbb W$ as the sum
of all eigenspaces corresponding to negative eigenvalues, respectively,
to non--positive eigenvalues of $A$.

Identifying $\lieg_{-1} \cong \lieg/\liep$ as vector spaces, one can
use the element $b_0$ to identify $C(Z)\subset F(Z)$ and $T(Z)$ with
subsets $C(\al)\subset F(\al)$ and $T(\al)$ of
$T_{x_0}M$. Equivariance of $\om$ then easily implies that the latter
subsets are independent of the choice of $b_0$, so they are
intrinsically associated to $\al\in T_{x_0}^*M$. Observe that $C(\al)$
is a linear subspace of $T_{x_0}M$, $F(\al)$ is only closed under
multiplication by scalars, while $T(\al)$ is just a subset.

The most concise way to formulate the results of \cite{main} for our
purposes is via normal coordinates centered at a point $x_0\in M$, see
\cite[Sec. 1.1.2]{main}. For $X\in\lieg_{-1}$ we can take the
``constant vector field'' $\tilde X\in\frak X(B)$ characterized by
$\om(\tilde X)=X$. Write $\varphi_{\tilde X}^t(b_0)$ for the flow line
of such a field emanating from $b_0\in B$ with $p(b_0)=x_0$; then we can use
$X\mapsto p(\varphi_{\tilde X}^1(b_0))$ to define a diffeomorphism from an
open neighborhood of $0$ in $\lieg_{-1}$ onto an open neighborhood of
$x_0$ in $M$. Combining the inverse of such a diffeomorphism with the
identification of $\lieg_{-1}$ with $T_{x_0}M$ provided by $b_0$, we
obtain a \textit{normal coordinate chart} centered at $x_0$ with
values in $T_{x_0}M$. Varying $b_0$ gives a family of charts
parametrized by the fiber $p^{-1}(x_0)\cong P$.

Now the first result we need concerns the form of the flow, and in
particular further fixed points of an infinitesimal
automorphism admitting one higher order fixed point. It is proved in
Propositions 2.5 and 2.17 of \cite{main}.
\begin{proposition}\label{prop:form}
  Let $\eta$ be an infinitesimal automorphism of an AHS--structure
  having a higher order fixed point at $x_0\in M$ with isotropy
  $\al\in T^*_{x_0}M$.
 
(1) If $C(\al)=0$, then the higher order fixed point $x_0$ is smoothly
isolated. 

(2) In any normal coordinate chart centered at $x_0$ with values in a
neighborhood $U\subset T_{x_0}M$ of zero we have
\begin{itemize}
\item[(a)] Any point of $U\cap F(\al)$ is a zero of $\eta$. 
\item[(b)] Any point of $U\cap C(\al)$ is a higher order fixed point
  of the same geometric type as $x_0$.
\item[(c)] For any $\xi\in T(\al)$ the flow on the intersection
  $U\cap\BR\cdot\xi$ is given by $\ph^t(s\xi)=\tfrac{s}{1+st}\xi$ for
  $ts>0$.
\end{itemize}
\end{proposition}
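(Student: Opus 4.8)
The plan is to reduce everything to a single first-order ODE for the equivariant function $f:B\to\lieg$ associated to $\tilde\eta$ along the flow lines of constant vector fields, and then to solve this ODE explicitly in each of the three algebraic regimes $F(\al)$, $C(\al)$, $T(\al)$. Fix $b_0\in p^{-1}(x_0)$, so that $Z:=f(b_0)\in\lieg_1$. For $X\in\lieg_{-1}$ let $\tilde X$ be the constant field with $\om(\tilde X)=X$, write $c(t)=\varphi^t_{\tilde X}(b_0)$ and $g(t)=f(c(t))$; by construction $p(c(t))$ is the point with normal coordinate $tX$. The identity $\Cal L_{\tilde\eta}\om=0$ together with the structure equation of $\om$ yields $g'(t)=[g(t),X]-\kappa_{c(t)}(\overline{g(t)},X)$, where $\overline{(\ )}$ denotes reduction modulo $\liep$ and $\kappa$ is the curvature function (the precise sign is irrelevant below). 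The decisive feature is that the curvature term depends only on $\overline{g(t)}$, hence vanishes identically whenever $g(t)\in\liep$. Since $p(c(t))$ is a zero of $\eta$ exactly when $g(t)\in\liep$ and a higher order fixed point exactly when $g(t)\in\lieg_1$, while the value of $\eta$ there is read off from $\overline{g(t)}\in\lieg_{-1}\cong T_{x_0}M$, all three parts of (2) become statements about the solution $g$, and the equation is linear in $g$, so solutions are determined by $g(0)=Z$.

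For (b) I would take $X\in C(Z)$, so $[Z,X]=0$. Then the constant path $g\equiv Z$ lies in $\lieg_1\subset\liep$, so the curvature term drops out and the full equation reduces to $g'=[Z,X]=0$, which this path satisfies; by uniqueness $g(t)\equiv Z$. Thus every point of the ray is a higher order fixed point carrying the same value $Z$, i.e.\ of the same geometric type, which gives (b) since $C(\al)$ is a linear subspace swept out by such rays. For (a) I would take $X\in F(Z)$, so $[X,[X,Z]]=0$, and try the affine path $g(t)=Z-t[X,Z]$. Because $[X,Z]\in\lieg_0$, this path lies in $\lieg_1\oplus\lieg_0=\liep$ for all $t$, so the curvature term again vanishes; the relation $[X,[X,Z]]=0$ is exactly what makes $g(t)=Z-t[X,Z]$ solve $g'=[g,X]$, and uniqueness identifies it as the solution. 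Hence $g(t)\in\liep$ throughout and every point of the ray is a zero of $\eta$; as $F(\al)$ is closed under scaling this covers $U\cap F(\al)$ and proves (a).

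Part (c) is where the real work lies, and I would treat it differently because here the solution leaves $\liep$. First, $\xi\in T(Z)$ means precisely that $(A,Z,\xi)$ with $A:=[Z,\xi]$ is an $\algsl_2$-triple ($[A,Z]=2Z$, $[A,\xi]=-2\xi$, $[Z,\xi]=A$). Next I would compute the dynamics of $\ph^t$ on the fiber over $x_0$: since $P$-equivariance forces $f$ to map this fiber into $\lieg_1\subset\liep$, the field $\tilde\eta$ is \emph{vertical} over $x_0$ and coincides there with the fundamental field of $f$; solving the resulting ODE inside $P$ gives the exact identity $\tilde\ph^t(b_0)=b_0\exp(tZ)$. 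Combining this with the $P$-equivariance of constant vector fields yields the exact formula $\ph^t(s\xi)=p\big(\varphi^s_{\tilde\xi}(b_0\exp(tZ))\big)$ in normal coordinates, valid even in the curved case. It then remains to identify the right-hand side with the point $\tfrac{s}{1+st}\xi$. In the homogeneous $\algsl_2$-model this is the Möbius identity $\exp(-s'\xi)\exp(tZ)\exp(s\xi)\in P\iff s'=\tfrac{s}{1+st}$, checked by a two-by-two matrix computation (the hypothesis $ts>0$ guarantees $1+st>0$, so the relevant branch is free of blow-up); the general case must then be reduced to this $\algsl_2$-computation.

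The main obstacle is exactly this last reduction in (c). Unlike (a) and (b), the relevant solution does not stay in $\liep$ — already in the homogeneous model one finds $\overline{g(\sigma)}=-\sigma^2\xi\neq0$ along the $T(\al)$-line, so the curvature term does \emph{not} simply drop out and the flow is not literally the flat Möbius flow on the nose. The special bracket relations defining $T(\al)$, i.e.\ the $\algsl_2$-structure, must be used to show that the curvature contributions along this single line either cancel or remain tangent to $\BR\cdot\xi$, so that the orbit stays on the line and is reparametrized exactly as in the homogeneous model. Finally, part (1) follows by combining (b) with its first-order converse: differentiating the defining condition along any smooth curve of higher order fixed points of the same geometric type shows that its velocity at $x_0$ must lie in $C(\al)$, so the strongly fixed component of $x_0$ is tangent to $C(\al)$; when $C(\al)=0$ there are no nonconstant such curves and the strongly fixed component reduces to $\{x_0\}$, i.e.\ $x_0$ is smoothly isolated.
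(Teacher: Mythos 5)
The paper does not prove Proposition \ref{prop:form} itself --- it cites Propositions 2.5 and 2.17 of \cite{main} --- so the comparison is with the arguments there. Your ODE $g'(t)=[g(t),X]-\kappa_{c(t)}(\overline{g(t)},X)$, obtained from $\Cal L_{\tilde\eta}\om=0$ and the structure equation, is exactly the right tool, and your guess--and--verify arguments for (a) and (b) (the paths $g\equiv Z$ and $g(t)=Z-t[X,Z]$ stay in $\liep$, so the curvature term drops out, and linearity gives uniqueness) are correct and in the spirit of \cite{main}. For (1), note that knowing only $\gamma'(0)\in C(\al)$ does not rule out nonconstant curves with vanishing velocity at $x_0$; you must run the same differentiation at \emph{every} point of a putative curve of higher order fixed points, using that $\dm C$ is constant along a $P$--orbit in $\lieg_1$ (clear for $G_0$, and $P_+$ acts trivially on $\lieg_1$), to conclude the curve is constant. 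That is a one--line repair.

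The genuine gap is in (c), and you have flagged it yourself: after correctly deriving the exact identity $\ph^t(s\xi)=p\bigl(\varphi^s_{\tilde\xi}(b_0\exp(tZ))\bigr)$ and the $\algsl_2$ M\"obius computation, you propose to finish by showing that ``curvature contributions along this line cancel or remain tangent to $\BR\cdot\xi$.'' That is not a proof, and it points at the wrong mechanism: no cancellation of curvature occurs or is needed. The missing step is the purely formal reparametrization lemma underlying the theory of distinguished curves (\cite[Sec.~5.3]{book}, which the paper invokes before Proposition \ref{prop:conf-non-null}). Set $\phi(s)=\tfrac{s}{1+st}$ and $h(s):=\exp(-\phi(s)\xi)\exp(tZ)\exp(s\xi)$; your $2\times 2$ computation says $h(s)\in P$ for $ts>0$. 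In \emph{any} Cartan geometry of this type the curve $u(s):=\varphi^{\phi(s)}_{\tilde\xi}(b_0)\cdot h(s)$ satisfies $\om(u'(s))=\Ad(h(s)^{-1})(\phi'(s)\xi)+h(s)^{-1}h'(s)$, and this $\lieg$--valued expression equals $\xi$ identically because that is precisely what the Maurer--Cartan form computes for the same product of exponentials in the homogeneous model. Since $u(0)=b_0\exp(tZ)$, uniqueness of flow lines gives $u(s)=\varphi^s_{\tilde\xi}(b_0\exp(tZ))$, whence $p\bigl(\varphi^s_{\tilde\xi}(b_0\exp(tZ))\bigr)$ is the point with normal coordinate $\phi(s)\xi$, which is (c). The curvature never enters; your observation that $\overline{g(\sigma)}=-\sigma^2\xi\neq 0$ along the ray only reflects that these points are not fixed, which is consistent with the M\"obius dynamics rather than an obstacle to them.
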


The second result concerns information on local flatness coming from
elements of $T(\al)$. 
\begin{proposition}\label{prop:flat}
  Let $\eta$ be an infinitesimal automorphism of an AHS--structure
  having a higher order fixed point at $x_0\in M$ with isotropy
  $\al\in T^*_{x_0}M$. Suppose that for some element $Z\in\lieg_1$
  belonging to the geometric type of $\al$ and all $X\in T(Z)$, the
  element $A=[Z,X]$ acts diagonalizably on $\lieg_{-1}$ and on each
  representation $\Bbb W$ of $G_0$ in which one of the harmonic
  curvature components of the geometry in question has its
  values. Suppose further that the following conditions are satisfied:
  \begin{enumerate}
  \item For each $X\in T(Z)$, all eigenvalues of $A$ on $\lieg_{-1}$ are
    non--positive and the $0$--eigenspace coincides with $C(Z)$.
  \item For each $X\in T(Z)$, $\Bbb W_{ss}(A)=0$.
  \item $\cap_{X\in T(Z)}\Bbb W_{st}(A)=0$. 
  \end{enumerate}

  Then in each normal coordinate chart centered at $x_0$ with values in
  a neighborhood $U\subset T_{x_0}M$ of zero and for each $\xi\in
  T(\al)$, there is an open neighborhood of
  $(\BR\cdot\xi)\cap(U\setminus 0)$ on which the geometry is locally
  flat. In particular, we get local flatness on an open subset
  containing $x_0$ in its closure.
\end{proposition}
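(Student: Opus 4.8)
The plan is to reduce the assertion to the vanishing of the harmonic curvature: since for the geometries in question this single component is a complete obstruction to local flatness (cf.\ the discussion in Section~\ref{2.1}), it suffices to prove that it vanishes on a neighborhood of $(\BR\cdot\xi)\cap(U\setminus 0)$. Write $\kappa_H\colon B\to\Bbb W$ for the $P$--equivariant function representing it. As $\Bbb W$ is a completely reducible $P$--module, $P_+$ acts trivially and the $P$--action factors through $G_0$. Since $\eta$ is an infinitesimal automorphism, its lift $\tilde\varphi^t$ to $B$ preserves $\om$ and hence all of the curvature, so $\kappa_H\o\tilde\varphi^t=\kappa_H$. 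I would fix $b_0\in p^{-1}(x_0)$ with $f(b_0)=Z$ and, for $X\in T(Z)$, set $b_s:=\varphi^s_{\tilde X}(b_0)$, so that $p(b_s)$ has normal coordinate $s\xi$.

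The engine of the argument is the observation that the defining relations of $T(Z)$ say exactly that $(Z,A,X)$ with $A=[Z,X]$ is an $\mathfrak{sl}_2$--triple. Combining this with the flow formula $\ph^t(s\xi)=\tfrac{s}{1+st}\xi$ from Proposition~\ref{prop:form}, I would lift the flow to $B$ along the ray and identify the frame change, as in the proof of Proposition~\ref{prop:form}, as
\[
\tilde\varphi^t(b_s)=b_{s'}\cdot q(t,s),\qquad s'=\tfrac{s}{1+st},
\]
where $q(t,s)\in P$ has $G_0$--component $\exp(\log(1+ts)\,A)$, its $P_+$--part being irrelevant for $\kappa_H$. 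On the homogeneous model this is the elementary identity
\[
\begin{pmatrix}1 & t\\ 0 & 1\end{pmatrix}\begin{pmatrix}1 & 0\\ s & 1\end{pmatrix}=\begin{pmatrix}1 & 0\\ s' & 1\end{pmatrix}\begin{pmatrix}1+ts & t\\ 0 & (1+ts)^{-1}\end{pmatrix},
\]
and in general the relevant data are governed by this subalgebra through $\om$. Invariance and equivariance of $\kappa_H$ then give, along the ray,
\[
\kappa_H(b_{s'})=(1+ts)^A\cdot\kappa_H(b_s),
\]
where $(1+ts)^A$ acts by $(1+ts)^\mu$ on the $\mu$--eigenspace of $A$ in $\Bbb W$.

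Next I would decompose $\kappa_H(b_s)$ into $A$--eigencomponents and let $t\to+\infty$, so that $s'\to0$ and $b_{s'}\to b_0$. Boundedness of the left--hand side forces every component of positive eigenvalue to vanish, so $\kappa_H(b_s)\in\Bbb W_{st}(A)$; moreover the limit identifies $\kappa_H(b_0)$ with the eigenvalue--$0$ component of $\kappa_H(b_s)$. In particular $\kappa_H(b_0)$ lies in the $0$--eigenspace of $A$, and a fortiori in $\Bbb W_{st}(A)$, for every $X\in T(Z)$; since $\kappa_H(b_0)$ does not depend on $X$, condition~(3) forces $\kappa_H(b_0)=0$. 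Feeding this back, the eigenvalue--$0$ component of $\kappa_H(b_s)$ vanishes, whence $\kappa_H(b_s)\in\Bbb W_{ss}(A)=0$ by condition~(2). Thus $\kappa_H$ vanishes along the whole ray.

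Finally I would pass from the ray to an open neighborhood. By condition~(1) the eigenvalues of $A$ on $\lieg_{-1}$ are non--positive with $0$--eigenspace $C(Z)$, so the forward flow attracts a neighborhood of the ray toward the neutral set $C(\al)\cup\{x_0\}$; by Proposition~\ref{prop:form} the points of $C(\al)$ are higher order fixed points of the same geometric type, so the argument above applies at each of them and gives $\kappa_H=0$ on $C(\al)\cap U$ as well. For $y$ near the ray, invariance yields $\rho(q(t,y))\kappa_H(b_y)=\kappa_H(b_{y'(t)})\to0$ as the base point is attracted to this null set, while condition~(2) makes the $G_0$--frame factor non--contracting on $\Bbb W$; the limit can then be $0$ only if $\kappa_H(b_y)=0$. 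Since $x_0$ lies in the closure of the ray, this in particular produces local flatness on an open set having $x_0$ in its closure. I expect the genuine difficulty to lie exactly in this last step: the ray dynamics is the explicit one--dimensional $\mathfrak{sl}_2$ picture, whereas controlling the transverse flow and the asymptotics of the frame factor $q(t,y)$ for off--ray $y$ --- and excluding contraction on $\Bbb W$ uniformly as the base point tends to the neutral set --- is where conditions~(1) and~(2) have to be exploited with care.
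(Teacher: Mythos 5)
The paper's own proof is essentially a two-line citation: condition (3) feeds into Proposition 2.15 of \cite{main} to give vanishing of the harmonic curvature at $x_0$ and, by the same argument, at every other higher order fixed point of the same geometric type (i.e.\ along $C(\al)$); then Corollary 2.14 of \cite{main} is invoked to convert this into local flatness near the punctured ray. Your proposal instead tries to reconstruct the content of those two cited results from scratch, and the on-ray part of your argument is a faithful reconstruction of the first one: the $\mathfrak{sl}_2$-holonomy identity, the conclusion $\kappa_H(b_s)\in\Bbb W_{st}(A)$ with limit equal to the $0$-eigencomponent, the use of condition (3) over all $X\in T(Z)$ to kill $\kappa_H(b_0)$, and the use of condition (2) to kill $\kappa_H$ along the ray are all correct in structure and consistent with the sign conventions implicit in the definitions of $\Bbb W_{ss}$ and $\Bbb W_{st}$. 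Two caveats: first, the identity $\tilde\varphi^t(b_s)=b_{s'}\cdot q(t,s)$ with $G_0$-part $\exp(\log(1+ts)A)$ is genuinely true on a curved geometry but is \emph{not} contained in the statement of Proposition \ref{prop:form} as quoted here --- it is part of the proof of \cite[Prop.\ 2.5]{main}, so you are using more than this paper makes available and would need to establish it (by comparing the flows of $\tilde\eta$ and $\tilde X$ via $\om$ against the model).

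The genuine gap is the final step, and you have correctly diagnosed where it sits. Passing from vanishing of $\kappa_H$ on the ray (or at the fixed points) to vanishing on an \emph{open neighborhood} of the punctured ray is exactly the content of \cite[Cor.\ 2.14]{main}, and your treatment of it is a heuristic: for off-ray points $y$ the holonomy $q(t,y)$ is no longer given by the clean one-parameter formula, and the assertion that ``condition (2) makes the $G_0$-frame factor non-contracting on $\Bbb W$'' uniformly as the base point is attracted to the neutral set $C(\al)\cup\{x_0\}$ is precisely what has to be proved; without an asymptotic comparison of $q(t,y)$ with $\exp(\log(1+ts)A)$ (or an equivalent stable-manifold type estimate exploiting condition (1)), the limit argument $\rho(q(t,y))\kappa_H(b_y)\to 0\Rightarrow\kappa_H(b_y)=0$ does not close. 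So: your proposal is a correct and more self-contained route for the on-ray vanishing, but as written it does not prove the neighborhood statement that the proposition actually asserts; the paper sidesteps both difficulties by citing \cite{main}.
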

\begin{proof}
  Using condition (3) we can apply Proposition 2.15 of \cite{main} to
  conclude that all harmonic curvature quantities vanish in the fixed
  point $x_0$. But this argument also applies to any other higher
  order fixed point of the same geometric type as $x_0$. Thus we may
  apply Corollary 2.14 of \cite{main} which directly gives the result.
\end{proof}

\section{Results}

 \subsection{Projective structures}\label{sect:projective}
A projective structure on a smooth manifold $M$ of dimension $n\geq 2$
is given by an equivalence class of torsion free linear connections on
$TM$ which share the same geodesics up to parametrization. An
infinitesimal automorphism in this case can be simply defined as a
vector field $\eta$ on $M$ whose flow preserves this class of
connections or equivalently the family of geodesic paths. What we
will prove here is
\begin{theorem}[compare \cite{Nagano-Ochiai}] 
\label{thm:proj}
  Let $(M,[\nabla])$ be a smooth manifold of dimension $n\geq 2$
  endowed with a projective structure and suppose that $\eta\in\frak
  X(M)$ is an infinitesimal automorphism of the projective structure
  which has a higher order fixed point at $x_0\in M$. Then there is an
  open neighborhood of $x_0\in M$ on which the projective structure is
  locally flat. 
\end{theorem}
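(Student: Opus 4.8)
The plan is to reduce the theorem to a purely algebraic verification of the hypotheses of Proposition~\ref{prop:flat} for the specific $|1|$--graded Lie algebra governing projective structures. The relevant algebra is $\lieg=\algsl(n+1,\BR)$ with the $|1|$--grading in which $\lieg_0\cong\alggl(n,\BR)$, $\lieg_{-1}\cong\BR^n$ (row or column vectors), and $\lieg_1\cong(\BR^n)^*$; concretely one writes a matrix in block form with a scalar, an $n\times n$ block, a column and a row, and reads off the grading from the off--diagonal blocks. The first step is to fix a nonzero element $Z\in\lieg_1$ representing the geometric type of the isotropy $\al$ (since $\lieg_0$ acts transitively on nonzero covectors here, there is essentially only one geometric type to consider for $\al\neq0$, and $\al=0$ means $x_0$ is an ordinary fixed point to higher order, which I treat as the degenerate case) and then to compute the three sets $C(Z)\subset F(Z)$ and $T(Z)$ explicitly by carrying out the bracket computations in $\algsl(n+1,\BR)$.

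\smallskip

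Next I would verify the three numbered conditions of Proposition~\ref{prop:flat}. For each $X\in T(Z)$ one forms $A=[Z,X]\in\lieg_0$; the defining relations $[[Z,X],X]=-2X$ and $[[Z,X],Z]=2Z$ say precisely that $(X,A,Z)$ is an $\algsl(2,\BR)$--triple with $X,Z$ as the nilpotent generators and $A$ the semisimple element, so $A$ automatically acts diagonalizably on every representation, in particular on $\lieg_{-1}$ and on the single harmonic curvature module (which for projective structures is the projective Weyl curvature in dimension $n\geq3$ and the Cotton--York tensor when $n=2$). The key computational claims are then: (1) the eigenvalues of $A$ on $\lieg_{-1}$ are non--positive with the zero--eigenspace equal to $C(Z)$; (2) $A$ has no strictly negative eigenvalue on the harmonic curvature module, i.e. $\Bbb W_{ss}(A)=0$; and (3) the intersection over all $X\in T(Z)$ of the non--positive eigenspaces $\Bbb W_{st}(A)$ is zero. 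Because $T(Z)$ is nonempty and reasonably large here, condition (3) should follow from letting $X$ range over enough triples to exhaust $\Bbb W$ by strictly positive--eigenvalue directions.

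\smallskip

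Once these conditions hold, Proposition~\ref{prop:flat} gives local flatness on an open set accumulating at $x_0$. To upgrade this to flatness on a full neighborhood of $x_0$ --- which is the extra strength of the projective statement, and the point where projective structures are genuinely special --- I would combine the conclusion with part~(1) of Proposition~\ref{prop:form}: the explicit form of $C(Z)$ should show that either $C(\al)=0$, forcing $x_0$ to be smoothly isolated, or $C(\al)$ fills out the directions along which flatness propagates. The essential feature of $\algsl(n+1,\BR)$ is that the nonzero $\lieg_1$--element $Z$ is, up to $\lieg_0$--conjugacy, a rank--one map, and the resulting $\algsl(2,\BR)$--action on $\lieg_{-1}$ has only the eigenvalues $0$ and $-1$ (no eigenvalue $-2$), so $T(Z)$ and the eigenspace decomposition are large enough to force the harmonic curvature to vanish on a neighborhood rather than merely on a cone.

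\smallskip

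I expect the main obstacle to be condition (3), the statement that $\cap_{X\in T(Z)}\Bbb W_{st}(A)=0$ on the harmonic curvature module: this requires understanding the projective Weyl/Cotton--York representation of $\lieg_0\cong\alggl(n,\BR)$ well enough to see that, as $X$ varies over the $\algsl(2,\BR)$--triples completing $Z$, the non--positive eigenspaces of $A$ have trivial common intersection. Verifying this amounts to checking that the harmonic module contains no $G_0$--submodule on which every such $A$ acts with non--positive spectrum, which is where the representation--theoretic content of the projective case sits. The bracket computations for $C,F,T$ and the $\algsl(2)$--triple verification in steps one and two are routine block--matrix manipulations and I would not grind through them.
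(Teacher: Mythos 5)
The first half of your plan---computing $C(Z)$, $F(Z)$, $T(Z)$ for $\lieg=\algsl(n+1,\BR)$ and checking the hypotheses of Proposition~\ref{prop:flat}---is exactly the paper's Lemma~\ref{lem:proj-alg}, but your assessment of where the difficulty lies is off. Condition (3) is not the obstacle: for $X\in T(Z)$ the element $A=[Z,X]$ acts on $\lieg_{-1}$ with eigenvalues $-2$ (on $\BR\cdot X$) and $-1$ (on $\ker Z$), hence with strictly positive eigenvalues on $\La^2\lieg_1\otimes\frak{sl}(\lieg_{-1})$ (eigenvalues $2,3$ minus $1,0,-1$) and on $\La^2\lieg_1\otimes\lieg_1$ for $n=2$, so $\Bbb W_{st}(A)=0$ already for a \emph{single} $X\in T(Z)$ and conditions (2) and (3) are immediate. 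Your closing claim that the eigenvalues on $\lieg_{-1}$ are $0$ and $-1$ with ``no eigenvalue $-2$'' is wrong, and inconsistent both with the $\algsl(2)$--triple relation $[A,X]=-2X$ that you yourself invoke and with condition (1), which requires the $0$--eigenspace to coincide with $C(Z)=0$.

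The genuine gap is the final step: upgrading from ``locally flat on an open set containing $x_0$ in its closure'' (all that Proposition~\ref{prop:flat} delivers) to flatness on a full neighborhood of $x_0$. The mechanism you propose---Proposition~\ref{prop:form}(1) together with the idea that $C(\al)$ ``fills out the directions along which flatness propagates''---does not work: here $C(\al)=0$, and the conclusion of Proposition~\ref{prop:form}(1) is only that $x_0$ is smoothly isolated as a higher order fixed point, which says nothing about the curvature on a neighborhood. The paper's actual argument is short but different: since $T(Z)=\{X:ZX=1\}$ is an affine hyperplane, the set of nonzero scalar multiples of elements of $T(\al)$ is precisely $T_{x_0}M\setminus\ker(\al)$, which is dense in the range $U$ of a normal coordinate chart; Proposition~\ref{prop:flat} gives local flatness on an open neighborhood of each punctured ray $(\BR\cdot\xi)\cap(U\setminus 0)$ with $\xi\in T(\al)$, hence on a dense open subset of $U$, and continuity of the harmonic curvature then forces flatness on all of $U$. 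Without this density observation your argument only reproduces the weaker conclusion already contained in Proposition~\ref{prop:flat}.
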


To describe projective structures as AHS--structures, take
$G=PSL(n+1,\BR)$ and let $P\subset G$ be the stabilizer of a point for
the canonical action of $G$ on ${\bf RP}^n$. On the level of Lie algebras, $\frak g=\frak{sl}(n+1,\BR)$, and the $|1|$--grading satisfies
$\lieg_0\cong\frak{gl}(n,\BR)$, $\lieg_{-1}\cong \BR^n$, and
$\lieg_1\cong \BR^{n*}$. The grading comes from a block decomposition
with blocks of sizes $1$ and $n$ of the form
 $$
 \begin{pmatrix}  -\tr(A) & Z \\ X & A\end{pmatrix}
 $$
 with $X\in \BR^n$, $A\in\frak{gl}(n,\BR)$, and $Z\in\BR^{n*}$. 

 Note $G_0 \cong GL(n,\BR)$. It follows that there is just one
 non--zero $P$--orbit in $\lieg_1$, and hence just one possible
 geometric type of isotropy. The algebra needed for our purpose is in
 the following lemma:
\begin{lemma}\label{lem:proj-alg}
  For $0\neq Z\in\lieg_1$,
  \begin{gather*}
    0= C(Z)\subset F(Z)=\{X\in\lieg_{-1}:ZX=0\}
    \\
    T(Z)=\{X\in\lieg_{-1}:ZX=1\}
  \end{gather*}

  Moreover, $Z$ satisfies all conditions of Proposition \ref{prop:flat}.
 \end{lemma}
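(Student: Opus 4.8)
The plan is to reduce the three sets to explicit commutator computations in the given block form, and then to feed the resulting eigenvalue data into Proposition~\ref{prop:flat}.

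First I would represent $X\in\lieg_{-1}$ by its lower--left block (a column) and $Z\in\lieg_1$ by its upper--right block (a row), so that $[X,Z]$ and $[Z,X]$ become ordinary matrix commutators. A direct computation shows that $[X,Z]$ has diagonal blocks $-ZX$ (a scalar) and $XZ$ (a rank--one matrix); for fixed $0\neq Z$ the equation $[X,Z]=0$ forces $XZ=0$ and hence $X=0$, giving $C(Z)=0$. Iterating, one finds $[X,[X,Z]]=-2(ZX)X$ as an element of $\lieg_{-1}$, so this vanishes exactly when $ZX=0$; this is the stated description of $F(Z)$. For $T(Z)$ I would compute $[[Z,X],X]=-2(ZX)X$ and $[[Z,X],Z]=2(ZX)Z$, so that both defining equations reduce to the single scalar condition $ZX=1$, whence $T(Z)=\{X:ZX=1\}$.

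Next, for $X\in T(Z)$ put $A=[Z,X]$, whose $\frak{gl}(n,\BR)$--block is $-XZ$ with trace $-ZX=-1$. Then $A$ acts on $Y\in\lieg_{-1}\cong\BR^n$ by $Y\mapsto-(XZ+\id)Y=-(ZY)X-Y$, and since $(XZ)X=X$ this operator is diagonalizable with eigenvalue $-2$ on $\BR X$ and $-1$ on the hyperplane $\ker Z$. Thus all eigenvalues on $\lieg_{-1}$ are negative and the $0$--eigenspace is $\{0\}=C(Z)$, which is condition~(1). Dually, $A$ acts on $\lieg_1$ with eigenvalue $+2$ on $\BR Z$ and $+1$ elsewhere. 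Diagonalizability of $A$ on every $G_0$--module is automatic, since $A$ is a semisimple element of the reductive algebra $\lieg_0=\frak{gl}(n,\BR)$; its integer eigenvalues on any tensor module are then obtained by summing the weights above over the factors.

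It remains to check conditions~(2) and~(3), and here the one genuinely non--mechanical step is to identify the harmonic curvature module $\Bbb W$. For $n\geq3$ it is the projective Weyl curvature, which sits inside $\lieg_{-1}\otimes\lieg_1^{\otimes3}$ (one contravariant and three covariant indices), while for $n=2$ the Weyl curvature vanishes identically and $\Bbb W$ is the Cotton--York tensor, sitting inside $\lieg_1^{\otimes3}$. Since each $\lieg_1$--factor contributes at least $+1$ to an eigenvalue of $A$, while there is at most one $\lieg_{-1}$--factor contributing at worst $-2$, the smallest eigenvalue of $A$ on $\Bbb W$ is $-2+3=+1>0$ for $n\geq3$ and $+3>0$ for $n=2$. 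In either case $A$ has no non--positive eigenvalue on $\Bbb W$, so $\Bbb W_{ss}(A)=\Bbb W_{st}(A)=0$ for every $X\in T(Z)$; this yields condition~(2) and, a fortiori, condition~(3). The main obstacle is thus not any hard estimate but the correct bookkeeping: fixing the number and variance of the tensor factors of $\Bbb W$ (equivalently, its homogeneity under the grading element), after which positivity of the eigenvalues --- and with it the hypotheses of Proposition~\ref{prop:flat} --- is forced.
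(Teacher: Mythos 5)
Your proposal is correct and takes essentially the same route as the paper: the same block--matrix bracket computations give $C(Z)$, $F(Z)$, $T(Z)$ and the eigenvalues $-2,-1$ of $A$ on $\lieg_{-1}$, and the eigenvalue bookkeeping on the harmonic curvature module yields the same minimal eigenvalue $+1>0$ and hence conditions (2) and (3). The only cosmetic difference is that you embed $\Bbb W$ in $\lieg_{-1}\otimes\lieg_1^{\otimes 3}$ rather than in $\La^2\lieg_1\otimes\frak{sl}(\lieg_{-1})$ (resp.\ $\La^2\lieg_1\otimes\lieg_1$ for $n=2$), which changes nothing.
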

 \begin{Pf}
   For $Z\in\lieg_1$ and $X,Y\in\lieg_{-1}$, the brackets relevant for
   our purposes are 
\begin{eqnarray*}
[Z,X ] & = & -XZ \\
\left[ \left[ Z,X \right]   ,Y \right]  & =  & -ZYX-ZXY \\
\left[ \left[ Z,X \right] ,Z \right] & = & 2ZXZ.
\end{eqnarray*}
 The descriptions of $C(Z)$, $F(Z)$, and $T(Z)$ follow easily.

For $X\in T(Z)$, setting $A = [Z,X]$ gives $[A,Y]=-Y-ZYX$ for all
$Y\in\lieg_{-1}$. Hence the eigenspace decomposition of $\lieg_{-1}$
is $\BR\cdot X\oplus\ker(Z)$ with corresponding eigenvalues
$-2$ and $-1$. In particular, condition (1) of Proposition
\ref{prop:flat} holds.

Now it is well known that for projective structures, there always is
only one harmonic curvature component, see Section 4.1.5 of
\cite{book}. If $n>2$, the harmonic curvature is the so--called
\textit{Weyl curvature}, the totally tracefree part of the
curvature of any connection in the projective class. The corresponding
representation $\Bbb W$ of $\lieg_0$ is contained in
$\La^2\lieg_1\otimes\frak{sl}(\lieg_{-1})$.  From the calculations for $\lieg_-$, one can see that the
possible eigenvalues of $A$ on $\La^2\lieg_1$ are
$2$ and $3$, while the possible eigenvalues on $\frak{sl}(\lieg_{-1})$
are $-1$, $0$, and $1$. Hence $\Bbb W_{st}(A)=0$ for $n>2$. For $n=2$
the basic invariant is a projective analog of the Cotton--York tensor,
and the corresponding representation $\Bbb W$ is contained in
$\La^2\lieg_1\otimes\lieg_1$. Clearly, $A$ has only positive
eigenvalues on this representation, so $\Bbb W_{st}(A)=0$ holds for $n \geq 2$. The remaining conditions from Proposition \ref{prop:flat} follow.
 \end{Pf}

\begin{Pf}[of Theorem \ref{thm:proj}]
  The description of $T(Z)$ in the lemma shows that the set of non--zero
  scalar multiples of elements of $T(\al)\subset T_{x_0}M$ is the complement of the hyperplane $\ker(\al)$. In particular, the intersection of this set with the range $U$ of a normal
  coordinate chart is a dense subset of $U$. Now by
  Proposition \ref{prop:flat}, the geometry is locally flat on this
  dense subset and hence on all of $U$.
\end{Pf}

\subsection{Pseudo--Riemannian conformal
  structures}
  \label{sect:conformal} 

  Recall that two pseudo--Riemannian metrics $g$ and $\hat g$ are
  conformally equivalent if there is a positive smooth function $f$
  such that $\hat g=f^2g$. Evidently, conformally equivalent metrics
  have the same signature. A conformal equivalence class of metrics on
  a smooth manifold $M$ is called a \emph{conformal structure} on
  $M$. This can be equivalently described as a first order structure
  corresponding to $CO(p,q)\subset GL(p+q,\BR)$.  There are three
  orbits of $CO(p,q)$ on $\BR^{(p+q)*}$, when $0 < p \leq q$.  These
  orbits give three \emph{geometric types} (see \cite[Sec 1.2]{main}):
  spacelike, null, or timelike, and correspond to the sign of the
  inner product of the vector with itself.  For definite signature,
  there is just one possible geometric type. As we shall see, flows
  with isotropy equal to a spacelike or timelike element of
  $\BR^{p,q*}$ behave very similarly, so the main distinction is
  between null isotropy and non--null isotropy. Of course, in definite
  signature, only non--null isotropy is possible. Recall that
  infinitesimal automorphsism of a conformal structure are conformal
  Killing vector fields. We are going to prove the following.

\begin{theorem}[compare \cite{Frances}, \cite{Frances-Melnick}]
\label{thm.conformal}
Let $(M,[g])$ be a smooth manifold of dimension $\geq 3$ endowed with
a conformal structure, and let $\eta$ be a conformal Killing vector
field on $M$. Then higher order fixed points with non--null isotropy
are smoothly isolated, while for null isotropy there is a smooth curve
contained in the strongly fixed component. Moreover, for any higher
order fixed point $x_0$ of $\eta$, there is an open subset $U\subset
M$ with $x_0\in\overline{U}$ on which $M$ is locally conformally flat.
\end{theorem}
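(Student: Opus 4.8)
The plan is to mirror the projective case: set up the Lie-theoretic model for conformal structures, compute the sets $C(Z)$, $F(Z)$, and $T(Z)$ explicitly for representatives $Z$ of each geometric type, and then feed the outcome into Propositions \ref{prop:form} and \ref{prop:flat}. Concretely, I would realize the conformal structure of signature $(p,q)$ as the AHS--structure with $G=PO(p+1,q+1)$, so that $\lieg=\oo(p+1,q+1)$ carries a $|1|$--grading with $\lieg_{-1}\cong\BR^{p,q}$, $\lieg_0\cong\co(p,q)=\oo(p,q)\oplus\BR$, and $\lieg_1\cong\BR^{p,q*}$. Writing the inner product $\langle\ ,\ \rangle$ of signature $(p,q)$ on $\lieg_{-1}$, the Killing-form isomorphism identifies $Z\in\lieg_1$ with a covector, hence (via the metric) with a vector that I will also call $Z$; the bracket $[\ ,\ ]:\lieg_1\otimes\lieg_{-1}\to\lieg_0$ then takes the standard form in which $[Z,X]\in\co(p,q)$ acts on $Y\in\lieg_{-1}$ by a combination of $\langle Z,X\rangle Y$, $\langle Z,Y\rangle X$, and $\langle X,Y\rangle Z$ terms. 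This is the one genuinely computational step, and its output governs everything downstream.

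From these brackets I would read off the three sets. For $Z$ with $\langle Z,Z\rangle\neq0$ (non--null isotropy, i.e.\ spacelike or timelike), I expect $C(Z)=0$; Proposition \ref{prop:form}(1) then immediately gives that such higher order fixed points are smoothly isolated, which is the first assertion of the theorem. For $Z$ null, I expect $C(Z)$ to be one--dimensional, spanned by the metric dual of $Z$ itself (a null direction), which by Proposition \ref{prop:form}(2)(b) produces a smooth curve of higher order fixed points of the same geometric type inside the strongly fixed component---the second assertion. In both cases the description of $T(Z)$, which should be a quadric-type condition such as $\{X:\langle Z,X\rangle=1,\ \langle X,X\rangle=c\}$ for an appropriate constant $c$, determines the directions along which I apply Proposition \ref{prop:flat}.

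The heart of the local-flatness claim is verifying the three hypotheses of Proposition \ref{prop:flat} for each geometric type. For $X\in T(Z)$ I set $A=[Z,X]\in\lieg_0$ and must check: that $A$ is diagonalizable on $\lieg_{-1}$ with non--positive eigenvalues and $0$--eigenspace equal to $C(Z)$ (condition (1)); that $\BB W_{ss}(A)=0$ (condition (2)); and that $\cap_{X\in T(Z)}\BB W_{st}(A)=0$ (condition (3)), where $\BB W$ is the representation carrying the single harmonic curvature component---the Weyl tensor for dimension $\geq4$ and the conformal Cotton--York tensor in dimension $3$. The eigenvalue computation for $A$ on $\lieg_{-1}$ should be a direct consequence of the bracket formula, analogous to the projective eigenvalues $-2,-1$; the subtler part is the eigenvalue analysis on $\BB W$, since $\BB W$ sits inside a tensor product built from $\lieg_{\pm1}$ and $\oo(p,q)$, and I must track how the $\BR$--grading element inside $\lieg_0$ acts. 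I expect $\BB W_{st}(A)$ to be nonzero for individual $X$ in the null case---which is exactly why one only gets flatness on an open set $U$ with $x_0\in\overline U$ rather than on a full neighborhood---so the intersection condition (3) is essential and must be checked by letting $X$ range over all of $T(Z)$.

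The main obstacle I anticipate is this intersection condition (3) in the null case: a single conjugate $A$ need not kill enough of $\BB W$, and I will have to exhibit, for any prospective common eigenvector of non--positive type, a choice of $X\in T(Z)$ detecting a strictly negative or mixed eigenvalue. This is where the signature enters decisively, since the available $X$ fill out a quadric whose spread of directions is what forces the intersection to vanish. Once (1)--(3) are verified, Proposition \ref{prop:flat} gives local flatness on an open neighborhood of $(\BR\cdot\xi)\cap(U\setminus0)$ for each $\xi\in T(\al)$, and since the non--null elements $T(\al)$ accumulate at $x_0$ (their scalar multiples sweeping out the region where $\langle\cdot,\cdot\rangle$ has the appropriate sign), taking the union over such $\xi$ produces the desired open set $U$ with $x_0\in\overline U$ on which the structure is conformally flat, completing the final assertion.
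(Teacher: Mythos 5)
Your proposal follows essentially the same route as the paper: the same Lie--algebraic model $G=PO(p+1,q+1)$, the same computation of $C(Z)$, $F(Z)$, $T(Z)$ separately for non--null and null $Z$, and the same application of Propositions \ref{prop:form} and \ref{prop:flat}, including the correct anticipation that condition (3) in the null case only holds after letting $X$ range over all of $T(Z)$ (the paper's argument being that $T(Z)$ spans $\lieg_{-1}$, so the common kernel of the resulting degree conditions is zero). The one minor inaccuracy is that for non--null $Z$ the set $T(Z)$ is a single point, namely $\tfrac{2}{\langle Z,Z\rangle}$ times the metric dual of $Z$, with $A=[Z,X]$ equal to twice the grading element, rather than a quadric; this only simplifies the verification and does not affect the argument.
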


This theorem is a consequence of the detailed descriptions of
infinitesimal automorphisms with the two kinds of higher order fixed
points in Propositions \ref{prop:conf-non-null} and
\ref{prop:conf-null}, respectively.

The description of pseudo--Riemannian conformal structures in
dimension $n\geq 3$ as parabolic geometries is well known, see
Sections 1.6 and 4.1.2 of \cite{book}.  A structure of signature
$(p,q)$, is modeled on $G/P$ with $G=PO(p+1,q+1)$ and $P$ the
stabilizer of a null line in $\BR^{p+1,q+1}$. The homogeneous model is
the space of null lines in $\BR^{p+1,q+1}$, a quadric in ${\bf RP}^{n+1}$.  The Lie algebra $\frak g=\frak{so}(p+1,q+1)$ can be
realized as

$$
\lieg =
 \left \{
\begin{pmatrix} a & Z & 0 \\ X & A & -\Bbb I Z^t\\ 
0 & -X^t\Bbb I & -a
 \end{pmatrix}
: 
\begin{array}{c} 
a \in \BR, \\
X \in \BR^n,\ Z \in \BR^{n*},  \\
A\in\frak{so}(p,q)
\end{array}
\right \}
$$

Here $\Bbb I$ is the diagonal matrix $\id_p \oplus - \id_q$. The
grading corresponding to $P$ has the form
$\lieg=\lieg_{-1}\oplus\lieg_0\oplus\lieg_1$ with the components
represented by $X$, $(A,a)$ and $Z$, respectively. The relevant
bracket formulae for $Z\in\lieg_1$ and $X,Y\in\lieg_{-1}$, are:
\begin{equation*}
\begin{aligned}
{} [Z,X]&=(-XZ+\Bbb I(XZ)^t\Bbb I,ZX)\\
[[Z,X],Z]&= 2ZXZ-Z\Bbb IZ^tX^t\Bbb I= 2ZXZ-\langle Z,Z\rangle X^t\Bbb I\\
[[Z,X],Y]&=-XZY+\Bbb I Z^tX^t\Bbb I Y-ZXY\\
         &=-XZY-ZXY+\langle X,Y\rangle \Bbb I Z^t. 
\end{aligned}\tag{$*$}
\end{equation*}
 where $\langle\ ,\ \rangle$ denotes the (standard) inner product of
 signature $(p,q)$ on $\lieg_{\pm 1}$ corresponding to the matrix
 $\Bbb I$. This leads to $G_0\cong CO(p,q)$ and $P\cong CO(p,q)
 \ltimes \BR^{n*}$. 

 In all dimensions $n\neq 4$, there is just one harmonic curvature
 component. For $n\geq 5$, this component is the Weyl curvature, the
 totally tracefree part of the Riemann curvature tensor of any metric
 in the conformal class. It is a section of the bundle associated to
 the irreducible component of highest weight in
 $\La^2\lieg_1\otimes\frak{so}(\lieg_{-1})$. For $n=3$, the harmonic
 curvature is the Cotton--York tensor, which is a section of the
 bundle associated to the irreducible component of highest weight in
 $\La^2\lieg_1\otimes\lieg_1$. In dimension $n=4$, the Weyl curvature
 splits into two components according to the splitting of
 $\La^2\lieg_1$ into a self--dual and an anti--self--dual part (with
 respect to the Hodge--$*$--operator), and these comprise the harmonic
 curvature.

\subsubsection{Non--null isotropy}
This lemma collects the needed algebraic results. Recall that for a
$|1|$--graded Lie algebra $\lieg$ there is a unique element
$E\in\lieg$, called the \textit{grading element}, such that for
$i=-1,0,1$ the subspace $\lieg_i$ is the eigenspace with eigenvalue
$i$ for $\ad(E)$. 
  
\begin{lemma}\label{lem:conf-nonnull-alg}
  If $Z\in\lieg_1$ is such that $\langle Z,Z\rangle\neq 0$, then
  \begin{enumerate}
  \item  The sets associated to $Z$ are
  \begin{gather*} 0=C(Z)\subset
    F(Z)= \{ X\in\lieg_{-1}:ZX=\langle X,X\rangle=0 \}
    \\ T(Z)= \left\{\frac{2}{\langle Z,Z\rangle}\Bbb I Z^t \right\}
    \end{gather*}
  
  \item For the unique element $X\in T(Z)$, the bracket
    $A=[Z,X]\in\lieg_0$ is twice the grading element of $\lieg$. In
    particular, all conditions of Proposition \ref{prop:flat} are
    satisfied by $Z$.
  \end{enumerate}
\end{lemma}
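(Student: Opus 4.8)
The plan is to reduce everything to the explicit bracket formulae in $(*)$, treating $ZX$ and $\langle X,X\rangle$ as the two scalar quantities that control the behaviour of $[[Z,X],X]$. Specializing the third line of $(*)$ to $Y=X$ gives $[[Z,X],X]=-2(ZX)X+\langle X,X\rangle\Bbb I Z^t$, and this single identity feeds directly into the descriptions of $F(Z)$ and $T(Z)$. Throughout, the standing hypothesis $\langle Z,Z\rangle\neq 0$ (equivalently, $\Bbb I Z^t$ is a nonzero vector with $\langle\Bbb I Z^t,\Bbb I Z^t\rangle=\langle Z,Z\rangle\neq 0$) is what makes the various systems non-degenerate.

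For $C(Z)$ I would argue directly: if $[Z,X]=0$, then reading off the two components of the first line of $(*)$ gives $ZX=0$ together with the matrix relation $XZ=\Bbb I Z^t X^t\Bbb I$. Applying this matrix relation to the column vector $\Bbb I Z^t$ and using $Z\Bbb I Z^t=\langle Z,Z\rangle$ and $(ZX)^t=ZX=0$, the right-hand side collapses to $0$ while the left-hand side equals $\langle Z,Z\rangle X$; since $\langle Z,Z\rangle\neq 0$ this forces $X=0$, so $C(Z)=0$. For $F(Z)=\{X:[[Z,X],X]=0\}$ I would set the displayed expression for $[[Z,X],X]$ equal to zero and split into two cases according to whether $\{X,\Bbb I Z^t\}$ is linearly independent or $X\in\BR\cdot\Bbb I Z^t$. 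In the independent case both coefficients must vanish, giving $ZX=0$ and $\langle X,X\rangle=0$ immediately. In the dependent case, writing $X=\lambda\,\Bbb I Z^t$ and substituting reduces the equation to $-\lambda^2\langle Z,Z\rangle\,\Bbb I Z^t=0$, whence $\lambda=0$; either way $X$ lies in $\{X:ZX=\langle X,X\rangle=0\}$, and the reverse inclusion is clear from the formula.

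The main work is the determination of $T(Z)$, and I expect the uniqueness assertion (that $T(Z)$ is a single point) to be the step needing a little care rather than a routine substitution. First I would verify that the proposed $X_0=\tfrac{2}{\langle Z,Z\rangle}\Bbb I Z^t$ satisfies both defining relations: a short computation gives $ZX_0=2$ and $\langle X_0,X_0\rangle=\tfrac{4}{\langle Z,Z\rangle}$, from which $[[Z,X_0],X_0]=-2X_0$ and $[[Z,X_0],Z]=2Z$ follow directly from $(*)$. For uniqueness I would exploit the \emph{second} relation: transposing $[[Z,X],Z]=2(ZX)Z-\langle Z,Z\rangle X^t\Bbb I=2Z$ and multiplying by $\Bbb I$ shows that any $X\in T(Z)$ must be a scalar multiple $X=\mu\,\Bbb I Z^t$. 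Feeding this back (so that $ZX=\mu\langle Z,Z\rangle$) turns the scalar relation into $\mu=2\mu-\tfrac{2}{\langle Z,Z\rangle}$, which pins down $\mu=\tfrac{2}{\langle Z,Z\rangle}$ and hence $X=X_0$.

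For part (2) I would compute $A=[Z,X_0]$ from the first line of $(*)$. Since $X_0=\tfrac{2}{\langle Z,Z\rangle}\Bbb I Z^t$ makes the $\frak{so}(p,q)$-component $-X_0Z+\Bbb I Z^t X_0^t\Bbb I$ vanish while the scaling component is $ZX_0=2$, the element $A$ is the pure scaling $(0,2)$, which one recognizes as $2E$ for the grading element $E$ (with $\ad(E)$ acting as $i$ on $\lieg_i$). Consequently $A$ acts as the scalar $-2$ on $\lieg_{-1}$; it is diagonalizable, has only the negative eigenvalue $-2$, and has trivial $0$-eigenspace, equal to $C(Z)=0$. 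This is exactly condition (1) of Proposition \ref{prop:flat}. Finally, since $E$ acts by $+1$ on $\lieg_1$ and by $0$ on $\lieg_0$, the element $A=2E$ acts by the positive scalar $+4$ on any submodule of $\La^2\lieg_1\otimes\frak{so}(\lieg_{-1})$ (the Weyl case, $n\geq 4$) and by $+6$ on any submodule of $\La^2\lieg_1\otimes\lieg_1$ (the Cotton--York case, $n=3$). In every case $A$ is diagonalizable on $\Bbb W$ with only positive eigenvalues, so $\Bbb W_{ss}(A)=\Bbb W_{st}(A)=0$ and conditions (2) and (3) hold automatically.
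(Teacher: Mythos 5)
Your proof is correct and follows essentially the same route as the paper: direct computation with the bracket formulae $(*)$, the same case split on linear dependence of $X$ and $\Bbb I Z^t$ for $F(Z)$, the same reduction of $T(Z)$ to a multiple of $\Bbb I Z^t$, and the same identification $A=2E$ yielding eigenvalue $-2$ on $\lieg_{-1}$ and positive eigenvalues on the curvature modules. The only (harmless) variation is in the $C(Z)$ step, where you evaluate the $\frak{so}(p,q)$-component of $[Z,X]=0$ on the vector $\Bbb I Z^t$, whereas the paper instead uses the vanishing of $[[Z,X],Z]$.
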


\begin{Pf}
  Assume $[Z,X] = 0$.  Then from the brackets in equation ($*$) above,
  $ZX = 0$ and $[[Z,X],Z] = 2ZXZ - \langle Z, Z \rangle X^t \Bbb I =
  0$.  Since $\langle Z, Z \rangle \neq 0$ by assumption, we conclude
  $X = 0$, so $C(Z)=0$. Next, suppose that $[[Z,X],X]=0$. Then formula
  ($*$) gives $2ZX X = \langle X,X\rangle\Bbb{I} Z^t$, which is
  impossible if $X$ and $\Bbb I Z^t$ are linearly dependent, so the
  description of $F(Z)$ follows.

  Next, if $[[Z,X],Z]$ is a multiple of $Z$, then $X^t\Bbb I$ must be
  a multiple of $Z$.  It is easy to compute this multiple, which
  yields $T(Z)$, and to verify that $A=[Z,X]$ is twice the grading
  element. This implies that $A$ always acts diagonalizably on all
  representations of $\lieg_0$, and it acts by multiplication by $-2$
  on $\lieg_{-1}$.  Since all eigenvalues of the grading element on
  $\La^2\lieg_1\otimes\lieg$ are positive, we conclude that $\Bbb
  W_{st}(A)=0$ for any representation $\Bbb W$ corresponding to a
  harmonic curvature component. \end{Pf}

From the general theory of parabolic geometries it follows that, given
any normal coordinate chart centered at $x_0$, straight lines through
zero in $T_{x_0}M$ correspond to distinguished curves of the geometry
emanating from $x_0$ (see \cite[Sec 5.3]{book}).  In conformal
geometry, these curves are conformal circles in non--null directions
and null geodesics in null directions.  While the latter are
determined by their initial direction up to a projective family of
reparametrizations, there is additional freedom for conformal circles.

Now the results of Lemma \ref{lem:conf-nonnull-alg} can be converted to geometry.

\begin{proposition}\label{prop:conf-non-null}
  Let $(M^n,[g])$ be a smooth manifold endowed with a
  pseudo--Riemannian conformal structure of signature $(p,q)$, where
  $n=p+q\geq 3$. Suppose $\eta$ is a conformal Killing vector field
  with a higher order fixed point at $x_0\in M$ with non--null
  isotropy $\al\in T^*_{x_0}M$.

\begin{enumerate}
\item The higher order fixed point $x_0$ is smoothly isolated. Null
  ge\-o\-de\-sics emanating from $x_0$ in directions in $\ker(\al)$ are,
  locally around $x_0$, zeros of $\eta$.

\item Let $\xi_0\in T_{x_0}M$ be the unique vector such that
  $g_x(\xi_0,-)$ is a non--zero multiple of $\al$ and such that
  $\al(\xi_0)=2$. Then for any conformal circle $c=c(s)$ in $M$ with
  $c(0)=x_0$ and $c'(x_0)=\xi_0$, there is $\ep>0$ such that
  $\ph^t_{\eta}(c(s))=c(\frac{s}{1+st})$ for $|s| <\ep$ and $st > 0$.
  Finally, an open neighborhood of $\{c(s): 0< |s| <\ep\}$, which in
  particular contains $x_0=c(0)$ in its closure, is conformally flat.
  \end{enumerate}
\end{proposition}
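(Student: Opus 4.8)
The plan is to deduce Proposition \ref{prop:conf-non-null} from the abstract machinery of Section \ref{2.3}, using the algebra already established in Lemma \ref{lem:conf-nonnull-alg}. The two numbered claims correspond exactly to the two propositions cited there, so the work is really one of careful translation between the algebraic data at $b_0$ and the intrinsic geometry on $M$. First I would fix a point $b_0 \in B$ with $p(b_0)=x_0$ and use it to identify the isotropy $\al$ with an element $Z \in \lieg_1$. Since $\al$ is non-null, $\langle Z,Z\rangle \neq 0$, so Lemma \ref{lem:conf-nonnull-alg} applies and tells me $C(Z)=0$, that $F(Z)$ is the set of null vectors annihilated by $Z$, and that $T(Z)$ is the single point $\frac{2}{\langle Z,Z\rangle}\Bbb I Z^t$.

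For part (1), smooth isolation of $x_0$ is immediate from Proposition \ref{prop:form}(1) together with $C(\al)=0$. For the statement about null geodesics, I would invoke Proposition \ref{prop:form}(2)(a): in a normal coordinate chart, every point of $U \cap F(\al)$ is a zero of $\eta$. The remaining task is to recognize $F(\al)$ geometrically. Under the identification $\lieg_{-1}\cong T_{x_0}M$, the condition $ZX=0$ says precisely $X \in \ker(\al)$, and $\langle X,X\rangle = 0$ says $X$ is a null vector for the conformal metric. By the remark preceding the proposition, straight lines through zero in normal coordinates are distinguished curves, and in null directions these are null geodesics; so $U \cap F(\al)$ is exactly the union of the null geodesics emanating from $x_0$ in the null directions lying in $\ker(\al)$, locally near $x_0$. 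This gives the claim.

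For part (2), I would first check that $\xi_0$, the vector with $g_{x_0}(\xi_0,-)$ proportional to $\al$ and $\al(\xi_0)=2$, corresponds under $b_0$ to the unique element $X \in T(Z)$. The Killing form identification $\lieg_1 \cong (\lieg_{-1})^*$ is, up to scale, the same as the metric duality $T_{x_0}M \cong T_{x_0}^*M$, so the vector dual to $\al$ is a multiple of $\Bbb I Z^t$; the normalization $\al(\xi_0)=2$ pins down exactly the multiple $\frac{2}{\langle Z,Z\rangle}$ appearing in $T(Z)$. Hence $\xi_0 \in T(\al)$. The flow formula $\ph^t_\eta(c(s)) = c\!\left(\frac{s}{1+st}\right)$ then follows from Proposition \ref{prop:form}(2)(c) applied to $\xi = \xi_0$: that proposition gives the flow along $U \cap \BR\cdot\xi_0$ in normal coordinates, and since $\xi_0$ is non-null the relevant distinguished curves are conformal circles, so I may take $c$ to be any conformal circle with $c(0)=x_0$, $c'(0)=\xi_0$, valid for $|s|<\ep$ and $st>0$. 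Finally, local conformal flatness of a neighborhood of $\{c(s):0<|s|<\ep\}$, with $x_0$ in its closure, is Proposition \ref{prop:flat} applied to $\xi_0 \in T(\al)$, whose hypotheses were verified for this $Z$ in Lemma \ref{lem:conf-nonnull-alg}(2).

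The main obstacle I anticipate is not any single hard estimate but the bookkeeping in part (2): matching the \emph{intrinsic} normalization of $\xi_0$ (defined via $g_{x_0}$ and the condition $\al(\xi_0)=2$) with the \emph{algebraic} element $\frac{2}{\langle Z,Z\rangle}\Bbb I Z^t \in T(Z)$, keeping track of the scaling constants in the Killing-form identification $(\lieg_{-1})^* \cong \lieg_1$ versus the metric on $\lieg_{-1}$ coming from $\Bbb I$. I would want to verify that these two pairings agree up to the positive scalar that makes the normalizations consistent, so that $T(\al)$ really is the single vector $\xi_0$ and not some other multiple. Once that identification is pinned down, the transfer of the flow formula and the conformal-circle interpretation via the distinguished-curve discussion is routine, and the local flatness conclusion is a direct citation of Proposition \ref{prop:flat}.
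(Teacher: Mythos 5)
Your proposal is correct and follows essentially the same route as the paper: identify $\al$ with $Z$ via $b_0$, apply Lemma \ref{lem:conf-nonnull-alg}, and then translate Propositions \ref{prop:form} and \ref{prop:flat} into the stated geometric conclusions. The one point the paper makes explicit that you gloss over is the justification of the quantifier ``for \emph{any} conformal circle'' in part (2): since a conformal circle is not determined by its initial direction alone, you need the observation that varying $b_0$ over the fiber $p^{-1}(x_0)$ produces \emph{all} normal coordinate charts centered at $x_0$, and hence realizes every conformal circle with the prescribed initial data as the coordinate line $\BR\cdot\xi_0$ in some chart.
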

\begin{Pf}
Varying the point $b_0 \in p^{-1}(x_0)$ gives all normal coordinate charts centered at $x_0$, which give all conformal
circles emanating from $x_0$.  Now the result
follows from Propositions \ref{prop:form} and \ref{prop:flat}, which
apply in our case by Lemma \ref{lem:conf-nonnull-alg}.
\end{Pf}

\subsubsection{Null isotropy} 
Again we start with an algebraic lemma:
\begin{lemma}\label{lem:conf-null-alg}
  Let $0\neq Z\in\lieg_1$ be such that $\langle Z,Z\rangle=0$. Then
  the sets associated to $Z$ are
  \begin{gather*}
    \BR\cdot\Bbb IZ^t=C(Z)\subset F(Z)=\{X\in\lieg_{-1}:ZX=\langle
    X,X\rangle=0\}\\ T(Z)=\{X\in\lieg_{-1}:ZX=1, \langle
    X,X\rangle=0\},
  \end{gather*}
and all conditions of Proposition \ref{prop:flat} are satisfied by $Z$. 
\end{lemma}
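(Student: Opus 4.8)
The plan is to follow the same template as the non-null case (Lemma \ref{lem:conf-nonnull-alg}), reading off the three sets directly from the bracket formulae ($*$) and then verifying the three conditions of Proposition \ref{prop:flat}. The new feature is that $\langle Z,Z\rangle=0$, which kills exactly the term that forced $C(Z)=0$ before, so now $C(Z)$ is genuinely nonzero and the algebra is correspondingly richer.

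First I would compute $C(Z)$. Setting $[Z,X]=0$ and using ($*$), the condition $[[Z,X],Z]=0$ becomes $2ZXZ-\langle Z,Z\rangle X^t\Bbb I=0$, which with $\langle Z,Z\rangle=0$ reduces to $ZXZ=0$; combined with the $\lieg_0$-part of $[Z,X]=0$, namely $-XZ+\Bbb I(XZ)^t\Bbb I=0$ together with $ZX=0$, one should find that $[Z,X]=0$ forces $X$ to be a scalar multiple of $\Bbb I Z^t$, and conversely that $X=\Bbb I Z^t$ does satisfy $[Z,X]=0$ (here $ZX=Z\Bbb I Z^t=\langle Z,Z\rangle=0$ is exactly what makes this work). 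This gives $C(Z)=\BR\cdot\Bbb I Z^t$. For $F(Z)$ I would impose $[[Z,X],X]=0$; formula ($*$) gives $-2ZXX+\langle X,X\rangle\Bbb I Z^t=0$, and analyzing when this can vanish (splitting into the cases where $X$ and $\Bbb I Z^t$ are or are not linearly dependent, as in the non-null proof) should yield $F(Z)=\{X:ZX=\langle X,X\rangle=0\}$. For $T(Z)$ I would use the two defining equations $[[Z,X],X]=-2X$ and $[[Z,X],Z]=2Z$; the first forces $-2ZXX+\langle X,X\rangle\Bbb I Z^t=-2X$ and the second forces $2ZXZ-\langle Z,Z\rangle X^t\Bbb I=2ZXZ=2Z$, i.e. $ZX=1$, and I expect these together to pin down $\langle X,X\rangle=0$ as well, giving the stated $T(Z)$.

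The more substantial part, and the main obstacle, is verifying the three conditions of Proposition \ref{prop:flat}, because here $A=[Z,X]$ is \emph{not} simply a multiple of the grading element, so the eigenvalue analysis is genuine rather than automatic. For a chosen $X\in T(Z)$ I would compute $A=[Z,X]\in\lieg_0$ explicitly from ($*$) and determine its action $Y\mapsto[A,Y]$ on $\lieg_{-1}$ via the formula $[[Z,X],Y]=-XZY-ZXY+\langle X,Y\rangle\Bbb I Z^t$. Since $ZX=1$, one sees $[A,X]=-2X$ and $[A,\Bbb I Z^t]=0$ (the eigenvalue $0$ eigenvector being exactly the generator of $C(Z)$), and I expect the remaining eigenvalues on the complementary null-orthogonal subspace to be $-1$, so that condition (1)---all eigenvalues non-positive with $0$-eigenspace equal to $C(Z)$---holds. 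For conditions (2) and (3) I would then pass to the harmonic-curvature representation $\Bbb W\subset\La^2\lieg_1\otimes\frak{so}(\lieg_{-1})$ (or $\La^2\lieg_1\otimes\lieg_1$ when $n=3$): condition (2), $\Bbb W_{ss}(A)=0$, says $A$ has no negative eigenvalue on $\Bbb W$, and condition (3) requires that intersecting the non-positive eigenspaces over all $X\in T(Z)$ leaves nothing. Because $C(Z)\ne 0$ now, a single $A$ will generally have a nontrivial non-positive eigenspace on $\Bbb W$, so condition (3) is where varying $X$ over all of $T(Z)$ is essential: I would argue that the $0$-eigenspace of $A$ rotates with $X$ and that the common intersection collapses to zero. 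Diagonalizability on each relevant $\Bbb W$ follows once $A$ is shown to be a semisimple element of $\lieg_0$, which I would check from its explicit form; this is the technical step I would be most careful about, and it is the place where the $n=4$ splitting of the Weyl curvature and the $n=3$ Cotton--York case must each be confirmed to satisfy (2) and (3).
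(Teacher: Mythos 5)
Your computations of $C(Z)$, $F(Z)$, and $T(Z)$, and your eigenvalue analysis of $A=[Z,X]$ on $\lieg_{-1}$ (eigenvalues $-2$ on $\BR X$, $0$ on $\BR\,\Bbb IZ^t=C(Z)$, and $-1$ on $\ker(Z)\cap X^\perp$), match the paper's proof exactly, and condition (1) of Proposition \ref{prop:flat} is handled correctly. Where your proposal stops short is precisely the step you flag as the main obstacle: conditions (2) and (3). You describe the intended mechanism as ``the $0$-eigenspace of $A$ rotates with $X$ and the common intersection collapses to zero,'' but you do not supply the computation that makes this work, and as stated it is not quite the argument that closes the proof.

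The paper's actual route is a degree count that you should be able to reproduce from your eigenvalue decomposition of $\lieg_{-1}$: the induced eigenvalues of $A$ on $\La^2\lieg_1$ are $1,2,3$ and on $\frak{so}(\lieg_{-1})$ are $-1,0,1$. Hence on any subrepresentation $\Bbb W\subset\La^2\lieg_1\otimes\lieg_1$ (the $n=3$ Cotton--York case) all eigenvalues are positive, so $\Bbb W_{st}(A)=0$ outright; and on any subrepresentation of $\La^2\lieg_1\otimes\frak{so}(\lieg_{-1})$ (which covers $n\geq 5$ and both components in $n=4$ uniformly, so no separate case analysis is needed) the smallest possible eigenvalue is $1+(-1)=0$, giving $\Bbb W_{ss}(A)=0$ immediately, with $\Bbb W_{st}(A)$ contained in the tensor product of the degree-$1$ part of $\La^2\lieg_1$ with the degree-$(-1)$ part of $\frak{so}(\lieg_{-1})$. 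The punchline for condition (3) is then not that the $0$-eigenspaces ``rotate'' in some generic sense, but the concrete observation that the degree-$(-1)$ part of $\frak{so}(\lieg_{-1})$ annihilates the $(-2)$-eigenspace $\BR X$, so every element of $\Bbb W_{st}(A)$ acts trivially on $X$; since the elements $X\in T(Z)$ span $\lieg_{-1}$, the intersection $\cap_{X\in T(Z)}\Bbb W_{st}(A)$ must vanish. Without this last spanning argument your plan has a genuine hole, since a single $A$ does indeed have nontrivial $\Bbb W_{st}(A)$ and nothing you wrote forces the intersection to be zero. (Your worry about diagonalizability is unfounded but harmless: $A$ acts diagonalizably on $\lieg_{-1}$ with real eigenvalues, hence is a hyperbolic semisimple element and acts diagonalizably on every finite-dimensional representation.)
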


\begin{Pf}
  As in the proof of Lemma \ref{lem:conf-nonnull-alg}, $[Z,X]=0$
  implies $ZX=0$, and then $[[Z,X],Y]=0$ implies that $X$ and $\Bbb
  IZ^t$ must be linearly dependent. Conversely, $[Z,\Bbb IZ^t]=0$ is
  easily verified, so the claim on $C(Z)$ follows. If $X$ is linearly
  independent of $\Bbb IZ^t$, then $0 = [[Z,X],X] =-2XZX+\langle X, X
  \rangle \Bbb I Z^t$ implies $ZX=\langle X,X\rangle=0$ and thus the
  description of $F(Z)$.

  Since $\langle Z,Z\rangle=0$, the equation $[[Z,X],Z]=2Z$ is
  equivalent to $ZX=1$. In this case, $[[Z,X],X]=-2X$ is equivalent to
  $\langle X,X\rangle=0$, and the description of $T(Z)$
  follows. Taking $X\in T(Z)$, the eigenspace decomposition
  of $\lieg_{-1}$ with respect to $A=[Z,X]$ follows from the formula
  for $[[Z,X],Y]$: The isotropic lines spanned by $X$ and by $\Bbb
  IZ^t$ are the eigenspaces for eigenvalues $-2$ and $0$,
  respectively, and $A$ acts by multiplication by $-1$ on the
  complementary subspace $\ker(Z)\cap X^\perp$. In particular,
  condition (1) from Proposition \ref{prop:flat} is satisfied. 

  The grading on $\lieg_{-1}$ is the one on $\BR^{p+1,q+1}$ that gives
  rise to the initial $|1|$--grading on $\lieg$, shifted by one
  degree. This shift does not change the induced grading of
  $\frak{so}(\lieg_{-1})$, which thus has eigenvalues $-1$, $0$, and
  $1$. On the other hand, the eigenvalues on $\La^2\lieg_1$ are $1$,
  $2$, and $3$. Hence for any subrepresentation $\Bbb W$ of
  $\La^2\lieg_1\otimes\lieg_1$, we have $\Bbb W_{st}(A)=0$, while for
  a subrepresentation $\Bbb W$ of $\La^2\lieg_1\otimes
  \frak{so}(\lieg_{-1})$, we have $\Bbb W_{ss}(A)=0$, and $\Bbb
  W_{st}(A)$ is contained in the tensor product of the degree one part
  of $\La^2\lieg_1$ with the degree $-1$ part of
  $\frak{so}(\lieg_{-1})$. This implies that all values in
  $\frak{so}(\lieg_{-1})$ of any element of $\Bbb W_{st}(A)$ act
  trivially on the $(-2)$--eigenspace in $\lieg_{-1}$ and thus on
  $X$. But $\lieg_{-1}$ is spanned by elements of $T(Z)$, so
  $\cap_{X\in T(Z)}\Bbb W_{st}(A)=0$ follows.
\end{Pf}

Again, this is easily converted into geometric information.

\begin{proposition}\label{prop:conf-null}
  Let $(M^n,[g])$ be a conformal manifold of signature $(p,q)$, where
  $n=p+q\geq 3$. Suppose that $\eta$ is a conformal Killing field with
  a higher order fixed point at $x_0\in M$ with null isotropy $\al\in
  T^*_{x_0}M$.  Let $F(\al)\subset T_{x_0}M$ be the set of those null
  vectors $\xi$ which satisfy $\al(\xi)=0$, and let $C(\al)\subset
  F(\al)$ be the line of all elements dual to a multiple of $\al$.

  Then for any normal coordinate chart centered at $x_0$ with values
  in $T_{x_0}M$, there is an open neighborhood $U$ of zero in
  $T_{x_0}M$ such that:

\begin{enumerate}
\item Elements of $F(\al)\cap U$ are zeros of $\eta$ and $C(\al)\cap
  U$ is contained in the strongly fixed component of $x_0$.

\item For any null vector $\xi\in U$ with $\al(\xi)\geq 0$, we get
  $\ph^t_{\eta}(\xi)=\tfrac{1}{1+t\al(\xi)}\xi$.

\item Any null vector $\xi\in U$ with $\al(\xi)>0$ has an open
  neighborhood on which the geometry is flat. In particular, the
  geometry is flat on an open set containing $x_0$ in its closure.
  \end{enumerate}
\end{proposition}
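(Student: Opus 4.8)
The plan is to convert the purely algebraic Lemma \ref{lem:conf-null-alg} into the geometric statements of Proposition \ref{prop:conf-null} by feeding it into the two general results from \cite{main}, namely Propositions \ref{prop:form} and \ref{prop:flat}. Since Lemma \ref{lem:conf-null-alg} already verifies all hypotheses of Proposition \ref{prop:flat} for a null $Z$, the work is almost entirely a matter of matching up the algebraically defined sets $C(Z)$, $F(Z)$, $T(Z)$ in $\lieg_{-1}$ with their geometric incarnations $C(\al)$, $F(\al)$, $T(\al)$ in $T_{x_0}M$ under the identification $\lieg_{-1}\cong T_{x_0}M$ provided by a frame $b_0$, and translating the abstract conclusions (higher order fixed point, zero of $\eta$, the flow formula) into the language of null vectors and the covector $\al$.

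First I would fix $b_0\in p^{-1}(x_0)$ so that the isotropy $\al$ corresponds to the chosen null $Z\in\lieg_1$, and record the dictionary: under $\lieg_{-1}\cong T_{x_0}M$, the pairing $ZX$ becomes $\al(\xi)$ and the form $\langle X,X\rangle$ becomes (a multiple of) $g_{x_0}(\xi,\xi)$, so null vectors in $T_{x_0}M$ correspond exactly to isotropic $X\in\lieg_{-1}$. With this in hand, Lemma \ref{lem:conf-null-alg} identifies $F(\al)$ with the null vectors $\xi$ satisfying $\al(\xi)=0$ and $C(\al)=\BR\cdot\Bbb IZ^t$ with the line of vectors dual to a multiple of $\al$, which is precisely the description in the statement. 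Part (1) then follows by invoking Proposition \ref{prop:form}(2a) for the $F(\al)$ assertion and Proposition \ref{prop:form}(2b) for $C(\al)$: since every point of $U\cap C(\al)$ is a higher order fixed point of the same geometric type as $x_0$ and $C(\al)$ is a smooth curve (a line), this line lies in the strongly fixed component of $x_0$.

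For part (2), the description $T(Z)=\{X:ZX=1,\langle X,X\rangle=0\}$ shows that a null vector $\xi$ with $\al(\xi)>0$ becomes an element of $T(\al)$ after rescaling by $1/\al(\xi)$; applying the flow formula of Proposition \ref{prop:form}(2c) to this rescaled vector and unwinding the scaling yields $\ph^t_\eta(\xi)=\tfrac{1}{1+t\al(\xi)}\xi$, which extends to $\al(\xi)=0$ trivially since such $\xi$ are zeros of $\eta$ by part (1). Part (3) is then immediate from Proposition \ref{prop:flat}: for $\xi\in T(\al)$ (equivalently a null vector with $\al(\xi)>0$, up to scale) the proposition produces an open neighborhood of $(\BR\cdot\xi)\cap(U\setminus 0)$ on which the geometry is flat, and in particular an open flat set having $x_0$ in its closure.

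The step I expect to require the most care is not any hard estimate but the bookkeeping around the two identifications at play: the frame-dependent isomorphism $\lieg_{-1}\cong T_{x_0}M$ and the fact that normal coordinate charts are themselves parametrized by the fiber $p^{-1}(x_0)$. I must ensure that the geometric sets $F(\al)$, $C(\al)$, $T(\al)$ are genuinely independent of the choice of $b_0$ (guaranteed by the equivariance remarks in Section \ref{2.3}) and that the flow formula in part (2), which is stated for an unrescaled null vector $\xi$, correctly absorbs the scaling factor relating $\xi$ to its normalization in $T(\al)$. The only genuinely conformal input beyond the bracket computations is the translation of $\langle X,X\rangle=0$ into the null condition and of $ZX$ into $\al(\xi)$, so once that dictionary is pinned down the proof reduces to citing the two general propositions.
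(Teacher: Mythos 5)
Your proposal is correct and follows essentially the same route as the paper: the paper's own proof consists precisely of the observation that a null vector $\xi$ is either a multiple of an element of $T(\al)$ (when $\al(\xi)\neq 0$) or lies in $F(\al)$ (when $\al(\xi)=0$), after which everything is read off from Lemma \ref{lem:conf-null-alg} together with Propositions \ref{prop:form} and \ref{prop:flat}. You merely make explicit the rescaling in part (2) and the frame-independence bookkeeping that the paper leaves implicit (having already addressed it in Section \ref{2.3}).
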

\begin{proof}
  Taking into account that a vector $\xi$ in part (2) is either a
  multiple of an element of $T(\al)$ (if $\al(\xi)\neq 0$) or
  contained in $F(\al)$ (if $\al(\xi)=0$), the result follows directly
  from Propositions \ref{prop:form} and \ref{prop:flat}. 
\end{proof}

\begin{bibdiv}
\begin{biblist}

\bib{main}{article}{
   author={{\v{C}}ap, Andreas},
   author={Melnick, Karin},
   title={Essential Killing fields of parabolic geometries},
   journal={Indiana Univ. Math. J.},
   status={to appear},
   eprint={arXiv:1208.5510},
   pages={48 pp.},
}
   
\bib{book}{book}{
   author={{\v{C}}ap, Andreas},
   author={Slov{\'a}k, Jan},
   title={Parabolic geometries. I. Background and general theory},
   series={Mathematical Surveys and Monographs},
   volume={154},
   publisher={American Mathematical Society},
   place={Providence, RI},
   date={2009},
   pages={x+628},
   isbn={978-0-8218-2681-2},
   review={\MR{2532439 (2010j:53037)}},
}

\bib{Frances}{article}{
   author={Frances, Charles},
   title={Local dynamics of conformal vector fields},
   journal={Geom. Dedicata},
   volume={158},
   date={2012},
   pages={35--59},
   issn={0046-5755},
   review={\MR{2922702}},
   doi={10.1007/s10711-011-9619-7},
}

\bib{Frances-Melnick}{article}{
   author={Frances, Charles},
   author={Melnick, Karin},
   title={Formes normales pour les champs conformes
     pseudo-riemanniens.}, 
   journal={Bull. SMF}, 
   status={to appear},
   eprint={arXiv:1008.3781},
   pages={49 pp.},
}

\bib{Nagano-Ochiai}{article}{
   author={Nagano, Tadashi},
   author={Ochiai, Takushiro},
   title={On compact Riemannian manifolds admitting essential projective
   transformations},
   journal={J. Fac. Sci. Univ. Tokyo Sect. IA Math.},
   volume={33},
   date={1986},
   number={2},
   pages={233--246},
   issn={0040-8980},
   review={\MR{866391 (88e:53060)}},
}

\end{biblist}
\end{bibdiv}

\end{document}